\newtheorem{thm}{Theorem}
\newtheorem{cor}[thm]{Corollary}
\newtheorem{lem}[thm]{Lemma}
\newtheorem{rem}{Remark}
\newenvironment{proof}{\begin{trivlist}
                       \item[]{\bf Proof.}
                       \hspace{0cm}}{\hfill $\Box$
                       \end{trivlist}}
\begin{document}

\title{A discrepancy principle for equations with monotone continuous operators}

\author{N. S. Hoang$\dag$\footnotemark[1]\, and\, A. G. Ramm$
\dag$\footnotemark[3]
\\
\\
$\dag$Mathematics Department, Kansas State University,\\
Manhattan, KS 66506-2602, USA
}

\renewcommand{\thefootnote}{\fnsymbol{footnote}}
\footnotetext[3]{Corresponding author. Email: ramm@math.ksu.edu}
\footnotetext[1]{Email: nguyenhs@math.ksu.edu}

\date{}
\maketitle

\begin{abstract} A discrepancy principle for solving nonlinear equations
with monotone operators given noisy data is formulated.  The existence and
uniqueness of the corresponding regularization parameter $a(\delta)$ is
proved. Convergence of the solution obtained by the discrepancy principle
is justified.  The results are obtained under natural assumptions on the
nonlinear operator. \end{abstract}

\textbf{MSC:} 47J05, 47J06, 47J35, 65R30

\textbf{Key words:} Discrepancy principle, monotone operators, 
regularization, nonlinear operator equations, ill-posed problems.

\section{Introduction}

Consider the equation:
\begin{equation} 
\label{eq1} 
F(u)=f,
\end{equation} 
where $F$ is a monotone operator in a real Hilbert
space $H$. Monotonicity is understood in the following sense:
\begin{equation}
\label{eq2}
\langle F(u)-F(v),u-v\rangle \ge 0,\quad \forall u,v\in H.
\end{equation}
Here $\langle \cdot, \cdot\rangle$ denotes the inner product in $H$. 
Assume that $F$ is continuous.

Equations with monotone operators are important in many applications and
were studied extensively, see, for example, \cite{D}--\cite{Pascali},
\cite{R499}, \cite{Skrypnik}, \cite{Vainberg}, and references therein.  
There are many technical and physical problems leading to equations with
such operators in the cases when dissipation of energy occurs. For
example, in \cite{[9]} and \cite{[8]}, Chapter 3, pp.156-189, a wide class
of nonlinear dissipative systems is studied, and the basic equations of
such systems can be reduced to equation (1) with monotone operators. Many
examples of equations with monotone operators can be found in \cite{Lions}
and in references mentioned above.  In \cite{[19]} and \cite{[20]} it is
proved that any solvable linear operator equation with a closed densely
defined operator in a Hilbert space $H$ can be reduced to an equation with
a monotone operator and solved by a convergent iterative process.

In this paper, apparently for the first time, a discrepancy principle for
solving equation \eqref{eq3} with noisy data (see Section 2) is proved
under natural assumptions.  No smallness assumptions on the nonlinearity,
no global restrictions on its growth, or other special properties of the
nonlinearity, except the monotonicity and continuity, are imposed. No
source-type assumptions are used. Our result is widely applicable. It is
well known that without extra assumptions, usually source-type assumption
concerning the right-hand side, or some equivalent assumption concerning
the smoothness of the solution, one cannot get a rate of convergence even
for linear ill-posed equations (see, for example, \cite{R499}).
On the other hand, such assumptions
are usually not algorithmically verifiable and often they do not hold. By
this reason we do not make such assumptions
 and do not give estimates of the rate of convergence. 

In \cite{Tautenhahn} a stationary equation $F(u) = f$ with a nonlinear
monotone operator $F$ was studied. The assumptions A1-A3 on p.197 in
\cite{Tautenhahn} are more restrictive than ours, and the Rule R2 on
p.199, formula (4.1) in \cite{Tautenhahn}, for the choice of the
regularization parameter is more difficult to use computationally: one has
to solve nonlinear equation (4.1) in \cite{Tautenhahn} for the
regularization parameter. Moreover, to use this equation one has to invert
an ill-conditioned linear operator $A+aI$ for small values of $a$.
Assumption A1 in \cite{Tautenhahn} is not verifiable, because the solution
$x^\dag$ is not known. Assumption A3 in \cite{Tautenhahn} requires $F$ to
be constant in a ball $B_r(x^\dag)$ if $F'(x^\dag)$ = 0. Our discrepancy
principle does not require these assumptions, and, in contrast to equation
(4.1) in \cite{Tautenhahn}, it does not require inversion of
ill-conditioned linear operators.

The novel results in our paper include Theorem~\ref{thm1} in
Section~\ref{sec3} and Theorem~\ref{thm2} in Section~\ref{sec4}. In
Theorem~\ref{thm1} a new discrepancy principle is proposed and justified
assuming only the monotonicity and continuity of $F$. Implementing the
discrepancy principle in Theorem~\ref{thm1} requires solving equation
\eqref{eq3} and then solving nonlinear equation \eqref{eq131} for the
regularization parameter $a(\delta)$. Theorem~\ref{thm2} allows one to
solve equations \eqref{eq3} and \eqref{eq131} approximately. Thus, when
$\delta$ is not too small one can save a large amount of computations in
solving equations \eqref{eq3} and \eqref{eq131} by applying
Theorem~\ref{thm2} and using our new stopping rule.  Our results allow
one to solve numerically stably equation \eqref{eq1} if $F$ is locally
Lipschitz and monotone. Based on Theorem~\ref{thm2}, an algorithm for
stable solution of equation \eqref{eq1} is formulated for locally
Lipschitz monotone operators.

\section{Auxiliary results}

Let us consider the following equation
\begin{equation}
\label{eq3}
F(V_{\delta,a})+aV_{\delta,a}-f_\delta = 0,\qquad a>0,
\end{equation}
where $a=const$. It is known (see, e.g., \cite[p.111]{R499}) 
that equation \eqref{eq3} with monotone continuous operator $F$ has 
a unique solution for any $f_\delta\in H$. 

Throughout the paper we assume that $F$ is a monotone continuous operator
and the inner product in $H$ is denoted $\langle u,v\rangle$. 
Below the word decreasing means strictly decreasing and increasing means strictly increasing. 

Recall the following result from \cite[p.112]{R499}:
\begin{lem}
\label{lem1}
Assume that equation \eqref{eq1} is solvable, $y$ is its minimal-norm solution, assumption
\eqref{eq2} holds, and $F$ is continuous. Then
\begin{equation}
\lim_{a\to 0} \|V_{a}-y\| = 0,
\end{equation}
where $V_{a}$ solves equation \eqref{eq3} with $\delta=0$.
\end{lem}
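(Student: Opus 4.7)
The plan is to establish boundedness of $\{V_a\}$ as $a\to 0$, extract a weak limit, show the limit is a solution by Minty's trick, identify the limit as the minimal-norm solution $y$, and then upgrade weak to strong convergence via a norm argument.

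First I would bound $\|V_a\|$ uniformly. Since $F(y)=f$ and $F(V_a)+aV_a = f$, subtracting gives $F(V_a)-F(y) = -aV_a$. Taking the inner product with $V_a - y$ and using monotonicity,
\begin{equation*}
0\le \langle F(V_a)-F(y), V_a - y\rangle = -a\langle V_a, V_a-y\rangle,
\end{equation*}
so $\|V_a\|^2 \le \langle V_a, y\rangle \le \|V_a\|\|y\|$, hence $\|V_a\|\le \|y\|$. In particular $aV_a\to 0$ strongly as $a\to 0$.

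Next, given any sequence $a_n\to 0$, boundedness allows a subsequence (still denoted $a_n$) with $V_{a_n}\rightharpoonup z$ weakly in $H$. To show $F(z)=f$, I would use the classical Minty argument: for every $w\in H$, monotonicity gives
\begin{equation*}
\langle F(V_{a_n})-F(w), V_{a_n}-w\rangle \ge 0, \qquad F(V_{a_n}) = f - a_n V_{a_n}.
\end{equation*}
Passing to the limit (the $a_n V_{a_n}$ term vanishes, and weak convergence handles the linear pairing) yields $\langle f - F(w), z-w\rangle \ge 0$ for all $w\in H$. Substituting $w = z - tv$ with $t>0$, dividing by $t$, and sending $t\to 0^+$ via continuity of $F$ gives $\langle f - F(z), v\rangle \ge 0$ for every $v\in H$, hence $F(z)=f$.

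To identify $z$ with the minimal-norm solution $y$, note that the solution set $N:=F^{-1}(f)$ is convex and closed (a continuous monotone operator defined on all of $H$ is maximal monotone, so its preimage of a point is convex and closed; closedness is immediate from continuity, and convexity is a standard consequence of maximal monotonicity). Thus the minimal-norm element of $N$ is unique. Since $z\in N$ and weak lower semicontinuity of the norm gives $\|z\|\le \liminf_n \|V_{a_n}\| \le \|y\|$, the minimality of $\|y\|$ forces $\|z\|=\|y\|$ and therefore $z=y$.

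Finally I would upgrade to strong convergence. We have $V_{a_n}\rightharpoonup y$ and $\|V_{a_n}\|\le \|y\| \le \liminf \|V_{a_n}\|$, so $\|V_{a_n}\|\to \|y\|$. In a Hilbert space, weak convergence together with norm convergence implies strong convergence, giving $V_{a_n}\to y$. Since every sequence $a_n\to 0$ has such a subsequence and the limit $y$ is the same in every case, the full limit $\lim_{a\to 0}\|V_a - y\|=0$ follows. The main technical subtlety is the Minty step identifying $z$ as a solution, since the nonlinear term $F(V_{a_n})$ need not converge strongly; everything else is routine once boundedness is established.
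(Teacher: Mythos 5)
Your proof is correct. Note that the paper does not prove Lemma~\ref{lem1} at all: it is recalled without proof from \cite[p.112]{R499}, so there is no internal argument to compare against. Your route --- the uniform bound $\|V_a\|\le\|y\|$ via monotonicity, weak subsequential limits, Minty's trick to show the weak limit solves $F(z)=f$, identification $z=y$ through convexity and closedness of the solution set together with weak lower semicontinuity of the norm, and the upgrade to strong convergence from $\|V_{a_n}\|\to\|y\|$ --- is the standard proof of this classical result and is essentially the argument given in the cited reference; all the steps (in particular the maximal monotonicity of a continuous monotone operator on all of $H$, needed for convexity of $F^{-1}(f)$, and the subsequence principle at the end) are correctly handled.
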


\begin{lem}
\label{lem2}
Assume $\|F(0)-f_\delta\|>0$.
Let $a>0$, and $F$ be monotone.
Denote 
$$
\psi(a) :=\|V_{\delta,a} \|,\qquad \phi(a):=a\psi(a)=\|F(V_{\delta,a}) - f_\delta\|,
$$ 
where $V_{\delta,a}$ solves \eqref{eq3}. 
Then
$\psi(a)$ is decreasing, and $\phi(a)$ is increasing.
\end{lem}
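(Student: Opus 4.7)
Fix $0 < a < b$ and set $V_a := V_{\delta,a}$, $V_b := V_{\delta,b}$. The plan is to compress the comparison into one algebraic inequality obtained from the defining equations, monotonicity, and Cauchy-Schwarz, and then read off both monotonicity assertions via a short case analysis. Subtracting \eqref{eq3} at parameter $a$ from the same equation at parameter $b$ gives $F(V_a) - F(V_b) = bV_b - aV_a$; pairing with $V_a - V_b$ and using monotonicity yields
$$(a+b)\langle V_a, V_b\rangle \ge a\|V_a\|^2 + b\|V_b\|^2$$
after expansion. Bounding the left-hand side by Cauchy-Schwarz and rearranging gives the key factored inequality
$$(\|V_a\| - \|V_b\|)\bigl(a\|V_a\| - b\|V_b\|\bigr) \le 0. \qquad (\ast)$$

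Before exploiting $(\ast)$, I observe that the hypothesis $\|F(0) - f_\delta\| > 0$ forces $V_{\delta,c} \ne 0$ for every $c > 0$: plugging $V_{\delta,c} = 0$ into \eqref{eq3} would give $F(0) = f_\delta$. Now the case $\|V_a\| < \|V_b\|$ is incompatible with $a < b$, for then both factors in $(\ast)$ would be strictly negative. The borderline case $\|V_a\| = \|V_b\|$ is also impossible: tracing equality backward forces equality in Cauchy-Schwarz, so $V_a$ and $V_b$ are parallel; combined with equal nonzero norms and positivity of the inner product this gives $V_a = V_b$, and the two instances of \eqref{eq3} then collapse to $(a-b)V_a = 0$, a contradiction. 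Consequently $\|V_a\| > \|V_b\|$, proving that $\psi$ is strictly decreasing.

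With the first factor in $(\ast)$ strictly positive, $(\ast)$ immediately forces $a\|V_a\| \le b\|V_b\|$, i.e., $\phi(a) \le \phi(b)$. The main obstacle is upgrading this to a strict inequality: equality $\phi(a) = \phi(b)$ would require equality throughout the chain derived from monotonicity and Cauchy-Schwarz, forcing $V_a = (b/a)V_b$ together with $F(V_a) = F(V_b)$. I expect this degenerate configuration to be excluded by combining the nonvanishing of $V_{\delta,\cdot}$ with a further appeal to the defining equation \eqref{eq3} — for example, inserting an intermediate parameter $c \in (a,b)$, applying the same equality-case analysis to the pairs $(a,c)$ and $(c,b)$, and using uniqueness of the solution of \eqref{eq3} to extract a contradiction with the strict decrease of $\psi$ already established. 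This last step is where the argument demands the most care.
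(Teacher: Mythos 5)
Your derivation of the factored inequality $(\ast)$, the exclusion of the cases $\|V_a\|<\|V_b\|$ and $\|V_a\|=\|V_b\|$ via the equality case of Cauchy--Schwarz, and hence the strict decrease of $\psi$, coincide step for step with the paper's proof (your $(\ast)$ is the paper's \eqref{2eq6}, which has a sign typo in its last line), and this part of your argument is complete and correct. The step you could not close --- upgrading $\phi(a)\le\phi(b)$ to $\phi(a)<\phi(b)$ --- is a genuine gap, but it is exactly the same gap as in the paper: the paper's case analysis also yields only ``$\psi(b)<\psi(a)$ implies $\phi(a)\le\phi(b)$'' and then asserts strict increase of $\phi$ without further argument.

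More importantly, the repair you sketch cannot succeed, because strict increase of $\phi$ is false under the stated hypotheses. In the equality case one gets, as you say, $V_{\delta,b}=(a/b)V_{\delta,a}$ and $F(V_{\delta,a})=F(V_{\delta,b})$; inserting an intermediate $c\in(a,b)$ only shows that $\phi$ is constant on $[a,b]$ and that $F$ is constant on the segment $\{tV_{\delta,a}:\ a/b\le t\le 1\}$, which is perfectly compatible with monotonicity, continuity, and uniqueness of solutions of \eqref{eq3}, so no contradiction appears. Concretely, in $H=\mathbb{R}$ take $F(u)=u-1$ for $u<1$, $F(u)=0$ for $1\le u\le 2$, $F(u)=u-2$ for $u>2$, and $f_\delta=1$; then $\|F(0)-f_\delta\|=2>0$, $V_{\delta,a}=1/a$ for $a\in[1/2,1]$, and $\phi(a)\equiv 1$ on that interval while $\psi(a)=1/a$ is strictly decreasing. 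So the correct conclusion obtainable from monotonicity alone is that $\psi$ is strictly decreasing and $\phi$ is nondecreasing; strictness of $\phi$ requires an additional hypothesis (for instance strict monotonicity or injectivity of $F$). You should state the nondecreasing version and stop there rather than search for the missing step; note, however, that the nondecreasing version suffices for arguments of the form ``$\phi_0(a(\delta))\to 0$ and $\phi_0(\epsilon)>0$ imply $a(\delta)\to 0$,'' while the uniqueness claim of Lemma~\ref{lem4} does rely on the strict version.
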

 
\begin{proof}
Since $\|F(0)-f_\delta \|>0$, one has $\psi(a)\not=0,\, \forall a\ge 0$. 
Indeed, if $\psi(a)\big{|}_{a=\tau}=0$, then $V_{\delta,a}=0$, and equation \eqref{eq3} implies
$\|F(0)-f_\delta\|=0$, which is a contradiction. 
Note that $\phi(a)=a\|V_{\delta,a}\|$. One has
\begin{equation}
\label{1eq3}
\begin{split}
0&\le \langle F(V_{\delta,a})-F(V_{\delta,b}),V_{\delta,a}-V_{\delta,b}\rangle\\
&= \langle -aV_{\delta,a}+bV_{\delta,b},V_{\delta,a}-V_{\delta,b}\rangle\\
&= (a+b)\langle V_{\delta,a},V_{\delta,b} \rangle -a\|V_{\delta,a}\|^2 - b\|V_{\delta,b}\|^2.
\end{split}
\end{equation}
Thus,
\begin{equation}
\label{2eq6}
\begin{split}
0&\le (a+b)\langle V_{\delta,a},V_{\delta,b} \rangle -a\|V_{\delta,a}\|^2 - b\|V_{\delta,b}\|^2\\
& \le  (a+b)\|V_{\delta,a}\|\|V_{\delta,b} \| - a\|V_{\delta,a}\|^2 - b\|V_{\delta,b}\|^2\\
& = (a \|V_{\delta,a}\| - b \|V_{\delta,b}\|)(\|V_{\delta,b}\|-\|V_{\delta,a}\|)\\
& = (\phi(a)-\phi(b))(\psi(a) - \psi(b)).
\end{split}
\end{equation}

If $\psi(b) > \psi(a)$ then \eqref{2eq6} implies $\phi(a)\ge \phi(b)$, so
$$
a\psi(a)\ge b\psi(b)> b\psi(a).
$$
Therefore, if $\psi(b)> \psi(a)$ then $b< a$. 

Similarly, if $\psi(b)< \psi(a)$ then $\phi(a)\le \phi(b)$. This implies $b> a$.

Suppose $\psi(a)=\psi(b)$, i.e., $\|V_{\delta,a}\|=\|V_{\delta,b}\|$. 
From \eqref{1eq3} one has
$$
\|V_{\delta,a}\|^2\le \langle V_{\delta,a},V_{\delta,b} \rangle \le \|V_{\delta,a}\|\|V_{\delta,b}\| = \|V_{\delta,a}\|^2.
$$
This implies $V_{\delta,a}=V_{\delta,b}$, and then equation \eqref{eq3} implies $a=b$. 

Therefore $\phi$ is increasing
and $\psi$ is decreasing.
\end{proof}

\begin{lem}
\label{lem0}
If $F$ is monotone and continuous, then
$\|V_{\delta,a}\|=O(\frac{1}{a})$ as $a\to\infty$, and
\begin{equation}
\label{4eq2}
\lim_{a\to\infty}\|F(V_{\delta,a})-f_\delta\|=\|F(0)-f_\delta\|.
\end{equation}
\end{lem}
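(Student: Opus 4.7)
The plan is to exploit monotonicity with the specific reference point $v=0$ to extract the $O(1/a)$ bound, and then combine this with continuity of $F$ to pass to the limit in $\|F(V_{\delta,a})-f_\delta\|$.

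First I would take the inner product of the defining equation \eqref{eq3} with $V_{\delta,a}$ itself, obtaining
$$
\langle F(V_{\delta,a}),V_{\delta,a}\rangle + a\|V_{\delta,a}\|^2 = \langle f_\delta,V_{\delta,a}\rangle.
$$
Now I would apply monotonicity \eqref{eq2} with $u=V_{\delta,a}$ and $v=0$, which yields $\langle F(V_{\delta,a})-F(0),V_{\delta,a}\rangle\ge 0$, i.e. $\langle F(V_{\delta,a}),V_{\delta,a}\rangle\ge\langle F(0),V_{\delta,a}\rangle$. Substituting into the previous identity gives
$$
a\|V_{\delta,a}\|^2 \le \langle f_\delta-F(0),V_{\delta,a}\rangle \le \|f_\delta-F(0)\|\,\|V_{\delta,a}\|,
$$
so after dividing by $\|V_{\delta,a}\|$ (which is nonzero when $\|F(0)-f_\delta\|>0$, and trivially bounded otherwise) one gets
$$
\|V_{\delta,a}\| \le \frac{\|f_\delta-F(0)\|}{a},
$$
which is the desired $O(1/a)$ estimate. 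This same inequality also shows $V_{\delta,a}\to 0$ as $a\to\infty$.

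For the second assertion, I would rewrite \eqref{eq3} as $F(V_{\delta,a})-f_\delta = -aV_{\delta,a}$, so that
$$
\|F(V_{\delta,a})-f_\delta\| = a\|V_{\delta,a}\|.
$$
Since $V_{\delta,a}\to 0$ as $a\to\infty$ by the first part, continuity of $F$ gives $F(V_{\delta,a})\to F(0)$, and hence $\|F(V_{\delta,a})-f_\delta\|\to\|F(0)-f_\delta\|$, establishing \eqref{4eq2}.

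The argument is quite short, and I do not anticipate any real obstacle; the only nontrivial point is recognizing that monotonicity used at the reference point $v=0$ (rather than at the unknown solution $y$, which need not even exist here) is exactly what produces the a priori bound $a\|V_{\delta,a}\|\le\|F(0)-f_\delta\|$. One consistency check: this bound is compatible with Lemma~\ref{lem2}, since it says $\phi(a)$ is bounded above by $\|F(0)-f_\delta\|$, and combined with the monotonicity of $\phi$ proved in Lemma~\ref{lem2} it follows immediately that $\phi(a)$ has a limit as $a\to\infty$; the continuity argument above identifies this limit as $\|F(0)-f_\delta\|$.
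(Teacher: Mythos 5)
Your proposal is correct and follows essentially the same route as the paper: multiplying equation \eqref{eq3} by $V_{\delta,a}$, using monotonicity at the reference point $0$ to obtain $a\|V_{\delta,a}\|^2\le\langle f_\delta-F(0),V_{\delta,a}\rangle$, and then invoking continuity of $F$ together with $V_{\delta,a}\to 0$ to identify the limit in \eqref{4eq2}. The added consistency check against Lemma~\ref{lem2} is a nice touch but not needed.
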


\begin{proof}
Rewrite \eqref{eq3} as
$$
F(V_{\delta,a}) - F(0) + aV_{\delta,a} + F(0)-f_\delta = 0.
$$
Multiply this equation
by $V_{\delta,a}$, use 
the monotonicity of $F$ and get:
$$
a\|V_{\delta,a}\|^2\le\langle a V_{\delta,a} + F(V_{\delta,a})-F(0), V_{\delta,a}\rangle = 
\langle f_\delta-F(0), V_{\delta,a}\rangle \le \|f_\delta-F(0)\|\|V_{\delta,a}\|.
$$
Therefore,
$\|V_{\delta,a}\|=O(\frac{1}{a})$. This and the continuity of $F$ imply \eqref{4eq2}.
\end{proof}

\begin{rem}
\label{rem1}
{\rm
If $\|F(0)-f_\delta\|>C\delta^\gamma$,\,$0<\gamma\le 1$ then relation \eqref{4eq2} implies 
\begin{equation}
\label{eqrem1}
\|F(V_{\delta,a})-f_\delta\|\ge C\delta^\gamma,\qquad 0<\gamma\le 1,
\end{equation}
for sufficiently large $a>0$. 
}
\end{rem}

\begin{lem}
\label{lem4}
Let $C>0$ and $\gamma\in (0,1]$ be constants such that $C\delta^\gamma>\delta$.
Suppose that $\|F(0)-f_\delta\|> C\delta^\gamma$. 
Then, there exists a unique $a(\delta)>0$ such 
that $\|F(V_{\delta,a(\delta)})-f_\delta\|=C\delta^\gamma$.
\end{lem}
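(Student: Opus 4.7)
The plan is to study $\phi(a) := \|F(V_{\delta,a}) - f_\delta\| = a\|V_{\delta,a}\|$ as a function of $a>0$ and apply the intermediate value theorem. Uniqueness is almost free: Lemma~\ref{lem2} already shows that $\phi$ is strictly increasing in $a$, so at most one $a(\delta)$ can satisfy $\phi(a(\delta))=C\delta^\gamma$. The real work is to show that the horizontal line $y=C\delta^\gamma$ is actually hit by the graph of $\phi$, which reduces to three ingredients: continuity of $\phi$ on $(0,\infty)$, a value of $a$ at which $\phi$ exceeds $C\delta^\gamma$, and a value of $a$ at which $\phi$ lies below $C\delta^\gamma$.

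For continuity, I would first show that $a\mapsto V_{\delta,a}$ is norm-continuous on $(0,\infty)$. Given $a_n\to a>0$, set $V_n := V_{\delta,a_n}$, $V := V_{\delta,a}$, subtract the two versions of \eqref{eq3}, and test against $V_n-V$; monotonicity of $F$ makes the $F$-difference term nonnegative, leaving
\[
a_n\|V_n-V\|^2 \le (a-a_n)\langle V,\,V_n-V\rangle \le |a-a_n|\,\|V\|\,\|V_n-V\|,
\]
so $\|V_n-V\| \le |a-a_n|\,\|V\|/a_n \to 0$. Combined with continuity of $F$, this gives continuity of $\phi$.

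For the limit at infinity, Lemma~\ref{lem0} yields $\phi(a)\to\|F(0)-f_\delta\|$, which by the hypothesis of the lemma exceeds $C\delta^\gamma$; hence $\phi(a)>C\delta^\gamma$ for all sufficiently large $a$. For the limit at zero I would invoke the standing assumption of the paper, namely that \eqref{eq1} is solvable, and compare $V_{\delta,a}$ with the noiseless Tikhonov element $V_a$: subtracting the two equations, testing against $V_{\delta,a}-V_a$ and using monotonicity gives the classical bound $\|V_{\delta,a}-V_a\|\le \delta/a$. Therefore
\[
\phi(a) = a\|V_{\delta,a}\| \le a\|V_a\| + \delta,
\]
and Lemma~\ref{lem1} guarantees $V_a\to y$ as $a\to 0$, so $a\|V_a\|\to 0$. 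Thus $\limsup_{a\to 0}\phi(a)\le \delta<C\delta^\gamma$, giving $\phi(a)<C\delta^\gamma$ for all sufficiently small $a$.

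Combining the three ingredients via the intermediate value theorem produces an $a(\delta)\in(0,\infty)$ with $\phi(a(\delta)) = C\delta^\gamma$, and strict monotonicity from Lemma~\ref{lem2} makes it unique. The main technical step I expect to care about is the norm-continuity of $a\mapsto V_{\delta,a}$; everything else is a fairly routine assembly of Lemmas~\ref{lem0}--\ref{lem2} together with the noiseless-versus-noisy comparison $\|V_{\delta,a}-V_a\|\le\delta/a$.
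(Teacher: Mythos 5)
Your proposal is correct, and its skeleton (intermediate value theorem for existence, strict monotonicity of $\phi$ from Lemma~\ref{lem2} for uniqueness, Lemma~\ref{lem0} for the behaviour as $a\to\infty$) matches the paper's. The genuine difference is in how the bound $\phi(a)<C\delta^\gamma$ for small $a$ is obtained. The paper pairs equation \eqref{eq3} with $F(V_{\delta,a})-f_\delta$ and with $V_{\delta,a}-y$, combines the two resulting inequalities via $ab\le \epsilon a^2+b^2/(4\epsilon)$, and arrives at $(1-\epsilon)\limsup_{a\to0}\|F(V_{\delta,a})-f_\delta\|^2\le\delta^2$. You instead write $\phi(a)=a\|V_{\delta,a}\|\le a\|V_a\|+\delta$ using the perturbation bound $\|V_{\delta,a}-V_a\|\le\delta/a$ (which the paper proves separately as \eqref{eq13}, and which you correctly re-derive), and then kill $a\|V_a\|$ via the uniform bound $\|V_a\|\le\|y\|$ or Lemma~\ref{lem1}. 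Both routes land on the same threshold $\limsup_{a\to0}\phi(a)\le\delta<C\delta^\gamma$ and both use solvability of \eqref{eq1}; yours is shorter and avoids the $\epsilon$-juggling, at the cost of importing \eqref{eq13} and \eqref{eq14.1}, which in the paper's linear ordering appear only after Lemma~\ref{lem4}. A further point in your favour: you actually prove the norm-continuity of $a\mapsto V_{\delta,a}$ on $(0,\infty)$ (correctly, via monotonicity), whereas the paper merely asserts it; since the intermediate value argument hinges on this continuity, your version closes a gap the paper leaves implicit.
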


\begin{proof}
We have $F(y)=f$, and
\begin{align*}
0 = &\langle F(V_{\delta,a})+aV_{\delta,a}-f_\delta, F(V_{\delta,a})-f_\delta \rangle\\
 = &\|F(V_{\delta,a})-f_\delta\|^2+a\langle V_{\delta,a}-y, F(V_{\delta,a})-f_\delta \rangle + a\langle y, F(V_{\delta,a})-f_\delta \rangle\\
 = &\|F(V_{\delta,a})-f_\delta\|^2+a\langle V_{\delta,a}-y, F(V_{\delta,a})-F(y) \rangle + a\langle V_{\delta,a}-y, f-f_\delta \rangle \\
 &+ a\langle y, F(V_{\delta,a})-f_\delta \rangle\\
\ge & \|F(V_{\delta,a})-f_\delta\|^2 + a\langle V_{\delta,a}-y, f-f_\delta \rangle + a\langle y, F(V_{\delta,a})-f_\delta \rangle.
\end{align*}
Here the 
monotonicity of $F$ was used. 
Therefore
\begin{equation}
\label{1eq1}
\begin{split}
\|F(V_{\delta,a})-f_\delta\|^2 &\le -a\langle V_{\delta,a}-y, f-f_\delta \rangle - a\langle y, F(V_{\delta,a})-f_\delta \rangle\\
&\le a\|V_{\delta,a}-y\| \|f-f_\delta\| + a\|y\| \|F(V_{\delta,a})-f_\delta\|\\
&\le  a\delta \|V_{\delta,a}-y\|  + a\|y\| \|F(V_{\delta,a})-f_\delta\|.
\end{split}
\end{equation}
Also,
\begin{align*}
0&= \langle F(V_{\delta,a})-F(y) + aV_{\delta,a} +f -f_\delta, V_{\delta,a}-y\rangle\\
&=\langle F(V_{\delta,a})-F(y),V_{\delta,a}-y\rangle + a\| V_{\delta,a}-y\| ^2 + a\langle y, V_{\delta,a}-y\rangle + \langle f-f_\delta, V_{\delta,a}-y\rangle\\
&\ge  a\| V_{\delta,a}-y\| ^2 + a\langle y, V_{\delta,a}-y\rangle + 
\langle f-f_\delta, V_{\delta,a}-y\rangle,
\end{align*}
where the 
monotonicity of $F$ was used again. Therefore,
$$
a\|V_{\delta,a}-y\|^2 \le a\|y\|\|V_{\delta,a}-y\|+\delta\|V_{\delta,a}-y\|.
$$
This implies
\begin{equation}
\label{1eq2}
a\|V_{\delta,a}-y\|\le a\|y\|+\delta.
\end{equation}
From \eqref{1eq1}, \eqref{1eq2}, and an elementary inequality $ab\le \epsilon a^2+\frac{b^2}{4\epsilon},\,\forall\epsilon>0$, one gets:
\begin{equation}
\label{3eq4}
\begin{split}
\|F(V_{\delta,a})-f_\delta\|^2&\le \delta^2 + a\|y\|\delta + a\|y\| \|F(V_{\delta,a})-f_\delta\|\\
&\le \delta^2 + a\|y\|\delta + \epsilon \|F(V_{\delta,a})-f_\delta\|^2 + 
\frac{1}{4\epsilon}a^2\|y\|^2,
\end{split}
\end{equation}
where $\epsilon>0$ is arbitrary small, fixed, independent of $a$, and can be chosen 
arbitrary small. 
Let $a\searrow 0$. Then \eqref{3eq4} implies
$\lim_{a\to 0}(1-\epsilon)\|F(V_{\delta,a})-f_\delta\|^2\le \delta^2<(C\delta^\gamma)^2$. 
Thus,
$$
\lim_{a\to 0} \|F(V_{\delta,a})-f_\delta\| < C\delta^\gamma,\qquad C>0,\quad 0<\gamma\le 1.
$$
This, the continuity of $F$, the continuity of $V_{\delta,a}$ with respect to $a \in [0,\infty)$, 
and inequality \eqref{eqrem1},  
imply that equation $\|F(V_{\delta,a})-f_\delta\|=C\delta^\gamma$ must have a 
solution $a(\delta) >0$.
\end{proof}

\begin{rem}{\rm 
Let $V_a:=V_{\delta,a}|_{\delta=0}$, so $F(V_a)+aV-f=0$. 
Let $y$ be the minimal-norm solution to equation \eqref{eq1}. 
We claim that
\begin{equation}
\label{eq13}
\|V_{\delta,a}-V_a\|\le \frac{\delta}{a}.
\end{equation}
Indeed, from \eqref{eq3} one gets
$$
F(V_{\delta,a}) - F(V_a) + a (V_{\delta,a}-V_a)=f- f_\delta.
$$
Multiply this equality by $(V_{\delta,a}-V_a)$ and use \eqref{eq2} to obtain
\begin{align*}
\delta \|V_{\delta,a}-V_a\| &\ge \langle f-f_\delta, V_{\delta,a}-V_a \rangle\\
&= \langle F(V_{\delta,a}) - F(V_a) + a (V_{\delta,a}-V_a), V_{\delta,a}-V_a) \rangle\\
&\ge a \|V_{\delta,a}-V_a\|^2.
\end{align*}
This implies \eqref{eq13}. 
}
\end{rem}

Let us derive a uniform with respect to $a$ bound on $\|V_a\|$. From the equation 
$$
F(V_a) + a V_a -F(y)=0,
$$
and the monotonicity of $F$ one gets
$$
0=\langle F(V_a) + a V_a -F(y), V_a - y \rangle \ge a \langle V_a, V_a-y\rangle.
$$
This implies the desired bound:
\begin{equation}
\label{eq14.1}
\|V_a\| \le \|y\|,\qquad \forall a>0.
\end{equation}
Similar arguments one can find in \cite[p. 113]{R499}. 

From \eqref{eq13} and \eqref{eq14.1}, one gets the following estimate:
\begin{equation}
\label{2eq1}
\|V_{\delta,a}\|\le \|V_a\|+\frac{\delta}{a}\le \|y\|+\frac{\delta}{a}.
\end{equation}

\section{A discrepancy principle}
\label{sec3}

Our standing assumptions are the monotonicity and continuity of $F$
and the solvability of equation \eqref{eq1}. 
They are not repeated below. We assume without loss of generality that $\delta\in (0,1)$.
\begin{thm}
\label{thm1}
Let $\gamma\in (0,1]$ and $C>0$ be some constants such that $C\delta^\gamma>\delta$.
Assume that 
$\|F(0)-f_\delta\|>C\delta^\gamma$. Let $y$ be its minimal-norm solution.
Then there exists a unique $a(\delta)>0$ such that
\begin{equation}
\label{eq131}
\|F(V_{\delta,a(\delta)}) - f_\delta \| = C\delta^\gamma,
\end{equation}
where $V_{\delta, a(\delta)}$ solves \eqref{eq3} with $a = a(\delta)$.

If $0< \gamma <1$ then
\begin{equation}
\label{eq14}
\lim_{\delta\to 0} \|V_{\delta, a(\delta)}-y\| = 0.
\end{equation}
\end{thm}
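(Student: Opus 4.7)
The statement splits into existence and uniqueness of $a(\delta)$, and the convergence \eqref{eq14}. Existence is exactly the content of Lemma~\ref{lem4}, applied under the standing assumption $\|F(0)-f_\delta\|>C\delta^\gamma$. Uniqueness is a direct consequence of Lemma~\ref{lem2}, which shows that $\phi(a):=a\|V_{\delta,a}\|=\|F(V_{\delta,a})-f_\delta\|$ is strictly increasing in $a$, so the equation $\phi(a)=C\delta^\gamma$ has at most one root.

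For the convergence my first step is to extract a lower bound on $a(\delta)$. Combining \eqref{eq131} with \eqref{2eq1} gives
\[
C\delta^\gamma \;=\; a(\delta)\,\|V_{\delta,a(\delta)}\| \;\le\; a(\delta)\|y\| + \delta,
\]
so $a(\delta)\ge (C\delta^\gamma-\delta)/\|y\|$, and the hypothesis $0<\gamma<1$ forces $\delta/a(\delta)\to 0$ as $\delta\to 0$. The second step uses \eqref{eq13} and the triangle inequality to split
\[
\|V_{\delta,a(\delta)}-y\|\;\le\;\|V_{\delta,a(\delta)}-V_{a(\delta)}\|+\|V_{a(\delta)}-y\|\;\le\;\frac{\delta}{a(\delta)}+\|V_{a(\delta)}-y\|,
\]
reducing the task to proving $V_{a(\delta)}\to y$ along any sequence $\delta_n\downarrow 0$.

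I would finish by a subsequence argument. After extracting a subsequence, either $a(\delta_n)\to 0$ or $a(\delta_n)\ge a_0>0$ for all $n$. In the first case Lemma~\ref{lem1} directly yields $V_{a(\delta_n)}\to y$. In the second case the identity $\|V_{\delta_n,a(\delta_n)}\|=C\delta_n^\gamma/a(\delta_n)\le C\delta_n^\gamma/a_0$ gives $V_{\delta_n,a(\delta_n)}\to 0$; continuity of $F$, $f_{\delta_n}\to f$, and $\|F(V_{\delta_n,a(\delta_n)})-f_{\delta_n}\|=C\delta_n^\gamma\to 0$ together force $F(0)=f$, so the minimal-norm solution is $y=0$, and then \eqref{eq13} combined with $\delta_n/a(\delta_n)\to 0$ gives $V_{a(\delta_n)}\to 0=y$ as well. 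Since the limit is the same along every subsequence, the full limit exists and equals $y$.

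The main obstacle is precisely this second case: once $a(\delta)$ fails to vanish, Lemma~\ref{lem1} is no longer directly applicable, and one has to exploit the vanishing residual $C\delta^\gamma$ to identify $F(0)=f$ and hence pin down $y=0$. The restriction $\gamma<1$ enters only through the lower bound on $a(\delta)$ in order to guarantee $\delta/a(\delta)\to 0$; at the borderline $\gamma=1$ the same computation yields only a constant upper bound for $\delta/a(\delta)$, so this scheme would break and a stronger choice of $C$ or an additional argument would be required.
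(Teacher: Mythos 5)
Your proof is correct and shares most of its skeleton with the paper's: existence and uniqueness of $a(\delta)$ via Lemma~\ref{lem4} and the strict monotonicity of $\phi$ from Lemma~\ref{lem2}; the lower bound $a(\delta)\|y\|\ge C\delta^\gamma-\delta$ obtained from \eqref{eq131} and \eqref{2eq1}, which gives $\delta/a(\delta)\to 0$ precisely when $\gamma<1$; and the triangle-inequality split \eqref{eq16}. Where you genuinely diverge is in justifying the appeal to Lemma~\ref{lem1}. The paper proves outright that $a(\delta)\to 0$: from \eqref{eq13} and \eqref{eq131} it gets $a(\delta)\|V_{a(\delta)}\|\le \delta+C\delta^\gamma\to 0$, and then invokes Lemma~\ref{lem2} with $\delta=0$ to conclude that $\phi_0(a)=a\|V_a\|$ is strictly increasing (legitimate since $\|F(0)-f\|\ge\|F(0)-f_\delta\|-\delta>C\delta^\gamma-\delta>0$), which forces $a(\delta)\to 0$. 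You instead run a subsequence dichotomy and treat the case $a(\delta_n)\ge a_0>0$ separately, showing $V_{\delta_n,a(\delta_n)}\to 0$ and hence $F(0)=f$, $y=0$. That case is in fact vacuous: $F(0)=f$ contradicts the standing hypothesis $\|F(0)-f_\delta\|>C\delta^\gamma>\delta\ge\|f-f_\delta\|$, so you could close it with a one-line contradiction instead of re-deriving convergence inside it; still, your version is internally consistent and valid. The trade-off is that the paper's route yields the cleaner quantitative conclusion $a(\delta)\to 0$ (the same device is reused in the proof of Theorem~\ref{thm2}), while yours avoids any appeal to the monotonicity of the noise-free function $\phi_0$ at the cost of a case analysis.
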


\begin{proof}
The existence and uniqueness of $a(\delta)$ follow from Lemma~\ref{lem4}. 
Let us show that
\begin{equation}
\label{eqx16}
\lim_{\delta\to0} a(\delta) = 0.
\end{equation}
The triangle inequality, inequality \eqref{eq13} and equality \eqref{eq131} imply
\begin{equation}
\label{hutchet}
\begin{split}
a(\delta)\|V_{a(\delta)}\| &\le a(\delta) \big{(}\|V_{\delta,a(\delta)} - V_{a(\delta)}\| + \|V_{\delta,a(\delta)}\|\big{)}\\
&\le \delta + a(\delta)\|V_{\delta,a(\delta)}\| = \delta + C \delta^\gamma.
\end{split}
\end{equation}
From inequality \eqref{hutchet}, one gets
\begin{equation}
\label{xxtichet}
\lim_{\delta\to 0}a(\delta)\|V_{a(\delta)}\| = 0.
\end{equation}
It follows from Lemma~\ref{lem2} with $f_\delta= f$, i.e., $\delta=0$, that 
 the function $\phi_0(a):=a\|V_{a}\|$ is nonnegative and strictly increasing on $(0,\infty)$.
 This and relation \eqref{xxtichet} imply: 
 \begin{equation}
\label{xxeqx28}
\lim_{\delta\to 0} a(\delta)= 0.
\end{equation}

From \eqref{eq131} and \eqref{2eq1}, one gets
\begin{equation}
C\delta^\gamma = a\|V_{\delta,a}\|\le a(\delta)\|y\| + \delta.
\end{equation}
Thus, 
one gets:
\begin{equation}
 C\delta^\gamma - \delta \le a(\delta)\|y\|.
\end{equation}
If $\gamma<1$ then $C-\delta^{1-\gamma}>0$ for sufficiently small $\delta$.  This implies: 
\begin{equation}
\label{eq15}
0\le \lim_{\delta\to 0}\frac{\delta}{a(\delta)}\le \lim_{\delta\to 0}\frac{\delta^{1-\gamma}\|y\|}{C-\delta^{1-\gamma}} = 0. 
\end{equation}
By the triangle inequality and inequality \eqref{eq13}, one has
\begin{equation}
\label{eq16}
\|V_{\delta, a(\delta)} - y\|\le \|V_{a(\delta)} - y\| + \|V_{a(\delta)}-V_{\delta,a(\delta)}\| 
\le \|V_{a(\delta)} - y\| + \frac{\delta}{a(\delta)}.
\end{equation}
Relation \eqref{eq14} follows from \eqref{eq15}, \eqref{eq16} and 
Lemma~\ref{lem1}.
\end{proof}

Instead of using \eqref{eq3}, one may use the following equation:
\begin{equation}
\label{}
F(V_{\delta,a})+a(V_{\delta,a}-\bar{u})-f_\delta = 0,\qquad a>0,
\end{equation}
where $\bar{u}$ is an element of $H$. Denote $F_1(u):=F(u+\bar{u})$.
 Then $F_1$ is monotone and continuous. 
Equation \eqref{eq3} can be written as:
\begin{equation}
\label{}
F_1(U_{\delta,a})+aU_{\delta,a}-f_\delta = 0,
\qquad U_{\delta,a}:=V_{\delta,a}-\bar{u},\quad a>0.
\end{equation}
By applying Theorem~\ref{thm1} with $F=F_1$ one gets the following result:

\begin{cor}
Let $\gamma\in (0,1]$ and $C>0$ be some constants such that $C\delta^\gamma>\delta$.
Let $\bar{u}\in H$ and $z$ be the solution to \eqref{eq1} with 
minimal distance to $\bar{u}$.
Assume that 
$\|F(\bar{u})-f_\delta\|>C\delta^\gamma$. 
Then there exists a unique $a(\delta)>0$ such that
\begin{equation}
\|F(\tilde{V}_{\delta,a(\delta)}) - f_\delta \| = C\delta^\gamma,
\end{equation}
where $\tilde{V}_{\delta, a(\delta)}$ solves the following equation:
\begin{equation*}
F(\tilde{V}_{\delta,a})+a(\delta)(\tilde{V}_{\delta,a}-\bar{u})-f_\delta = 0.
\end{equation*}

If $\gamma \in (0,1)$ then this $a(\delta)$ satisfies
\begin{equation}
\lim_{\delta\to 0} \|\tilde{V}_{\delta, a(\delta)}-z\| = 0.
\end{equation}
\end{cor}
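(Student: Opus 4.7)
The plan is to reduce the Corollary to Theorem~\ref{thm1} by the translation $u \mapsto u - \bar{u}$, exactly as hinted in the text preceding the Corollary. Define $F_1(u) := F(u+\bar{u})$ and $U_{\delta,a} := \tilde{V}_{\delta,a} - \bar{u}$. The first step is to verify that $F_1$ inherits the two standing hypotheses, namely continuity (clear, since translation is continuous) and monotonicity (a one-line check: $\langle F_1(u)-F_1(v),u-v\rangle = \langle F(u+\bar u)-F(v+\bar u),(u+\bar u)-(v+\bar u)\rangle\ge 0$). Then I would rewrite the defining equation for $\tilde V_{\delta,a}$ as $F_1(U_{\delta,a})+aU_{\delta,a}-f_\delta=0$, which is equation \eqref{eq3} for $F_1$ in place of $F$.

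Next I would translate the data of the Corollary into the data required by Theorem~\ref{thm1} applied to $F_1$. The solvability hypothesis transfers: if $F(u)=f$ has a solution, then $F_1(u-\bar u)=f$, so the set of solutions of $F_1(w)=f$ is exactly the set of solutions of $F(u)=f$ shifted by $-\bar u$. In particular, $w^\ast := z - \bar{u}$ is a solution of $F_1(w)=f$, and I need to check that $w^\ast$ is the minimal-norm solution of $F_1(w)=f$. This is immediate from the definition of $z$: any other solution $w$ of $F_1(w)=f$ corresponds to the $F$-solution $w+\bar u$, so $\|w\| = \|(w+\bar u)-\bar u\| \ge \|z-\bar u\| = \|w^\ast\|$. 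Finally, the hypothesis $\|F(\bar u)-f_\delta\|>C\delta^\gamma$ becomes $\|F_1(0)-f_\delta\|>C\delta^\gamma$, which is precisely the hypothesis of Theorem~\ref{thm1} for $F_1$.

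With all inputs in place, I would simply invoke Theorem~\ref{thm1} for $F_1$ to get a unique $a(\delta)>0$ with $\|F_1(U_{\delta,a(\delta)})-f_\delta\|=C\delta^\gamma$; since $F_1(U_{\delta,a(\delta)})=F(\tilde V_{\delta,a(\delta)})$, this is the discrepancy equation asserted in the Corollary. For the second conclusion (the case $\gamma\in(0,1)$), Theorem~\ref{thm1} applied to $F_1$ gives $\lim_{\delta\to 0}\|U_{\delta,a(\delta)}-w^\ast\|=0$, and since $\|U_{\delta,a(\delta)}-w^\ast\| = \|\tilde V_{\delta,a(\delta)}-\bar u-(z-\bar u)\| = \|\tilde V_{\delta,a(\delta)}-z\|$, we obtain the claimed limit.

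There is no serious obstacle here; the entire argument is a bookkeeping exercise in the translation $u\leftrightarrow u-\bar u$. The only point that deserves a sentence in the written proof, rather than being left implicit, is the identification of the minimal-norm solution of the translated equation with $z-\bar u$, because Theorem~\ref{thm1}'s conclusion is stated in terms of the minimal-norm solution, whereas the Corollary is stated in terms of the solution of minimal distance to $\bar u$; these two notions coincide exactly under the substitution $F\leadsto F_1$, which is what makes the reduction work.
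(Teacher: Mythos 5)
Your proposal is correct and follows exactly the paper's intended argument: the text preceding the Corollary performs the same reduction, defining $F_1(u):=F(u+\bar{u})$ and $U_{\delta,a}:=\tilde{V}_{\delta,a}-\bar{u}$ and invoking Theorem~\ref{thm1} for $F_1$. Your explicit verification that $z-\bar{u}$ is the minimal-norm solution of the translated equation is a detail the paper leaves implicit, and it is handled correctly.
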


\begin{rem}{\rm
It is an open problem to choose $\gamma$ and $C$ optimal in some 
sense.
}
\end{rem}

\begin{rem}{\rm 
Theorem~\ref{thm1} and Theorem~\ref{thm2} do not hold, in general, for $\gamma=1$.
Indeed, let $Fu = \langle u,p\rangle p$, 
$\|p=1\|,\, p\perp \mathcal{N}(F):=\{u\in H: Fu=0\}$, $f=p$, $f_\delta=p+q\delta$,
where $\langle p,q\rangle=0$, $\|q\|=1$, $Fq=0$, $\|q\delta\|=\delta$.
One has $Fy=p$, where $y=p$, is the minimal-norm solution to the equation $Fu=p$.
Equation $Fu+au=p+q\delta$, has the unique solution $V_{\delta,a}=q\delta/a + p/(1+a)$.
Equation \eqref{eq131} is $C\delta=\|q\delta + (ap)/(1+a)\|$. This equation yields $a=a(\delta)=c\delta/(1-c\delta)$, where $c:=(C^2-1)^{1/2}$, and we assume $c\delta<1$.
Thus, $\lim_{\delta\to0}V_{\delta,a(\delta)}=p+c^{-1}q:=v$, and $Fv=p$.
Therefore $v=\lim_{\delta\to0}V_{\delta,a(\delta)}$ is not $p$, i.e., is not the minimal-norm 
solution to the equation $Fu=p$. 
Similar arguments one can find in \cite[p. 29]{R470}.
}
\end{rem}

\section{Applications}
\label{sec4}

In this section we discuss methods for solving equations \eqref{eq3} and 
\eqref{eq1} using the new discrepancy principle, i.e., Theorem~\ref{thm1}. 
Implementing this principle, i.e., solving equation \eqref{eq131}, 
requires solving equation \eqref{eq3}. 
{\it If $F$
 is linear}, then equation \eqref{eq3} has the form:
\begin{equation}
\label{eq21}
(F+aI)u = f_\delta.
\end{equation}
Since $F\ge 0$ the operator $F+aI$ is boundedly invertible, 
$\|(F+aI)^{-1}\| \le \frac{1}{a}$, and  
equation \eqref{eq21} is well-posed if $a>0$ is not too small. 
There are many methods for solving efficiently well-posed linear equations
with positive-definite operators.
For this reason we {\it mainly discuss some methods for stable solution of equation \eqref{eq1} with
 nonlinear operators.} In this section a method is developed 
for a stable solution of equation \eqref{eq1} with locally Lipschitz monotone operator $F$,
so we assume that
\begin{equation}
\label{eqz23}
\|F(u)-F(v)||\leq L||u-v||,\quad u,v\in B(u_0,R):=\{u:\|u-u_0\|\le R\},
\quad L=L(R).
\end{equation} 
Here $u_0\in H$ is an arbitrary fixed element.
Consider the operator
$$
G(u):= u - \lambda [F(u) + a u -f_\delta],\quad \lambda >0 .
$$
We claim that $G$ is a contraction mapping in $H$ provided that $\lambda$ 
is sufficiently small.
Let $F_1:=F+aI$. Then \eqref{eqz23} implies $\|F_1(u)-F_1(v)\|\le 
(a+L)\|u-v\|$.
Using the monotonicity of $F$, one gets
\begin{equation}
\begin{split}
\|G(u)-G(v)\|^2 &= \|(u-v) - \lambda \big{(}F_1(u)- F_1(v)\big{)}\|^2\\
&= \|u-v\|^2 - 2\lambda \langle u-v, F_1(u)- F_1(v)\rangle + \lambda^2\|F_1(u)- F_1(v)\|^2\\
&\le \|u-v\|^2[1 - 2\lambda a + \lambda^2(a+L)^2].
\end{split}
\end{equation}
This implies that $G$ is a contraction mapping if
$$
0 < \lambda < \frac{2a}{(a+L)^2}.
$$
For these $\lambda$ the solution $V_{\delta,a}$ of equation \eqref{eq3} can be 
found by the following iterative process:
\begin{equation}
\label{eq22}
u_{n+1} = u_n - \lambda [F(u_n)+a u_n -f_\delta],\quad u_0:= u_0.
\end{equation} 
After finding $V_{\delta,a}$, one finds $a(\delta)$ from 
the discrepancy principle \eqref{eq131},
i.e., by solving the nonlinear equation:
\begin{equation}
\label{eq23}
\phi(a(\delta)):=\|F(V_{\delta,a(\delta)}) - f_\delta\| = C\delta^\gamma.
\end{equation}
There are many methods for solving this equation. For example, one can use 
the bisection method or the golden section method. 
If $a(\delta)$ is found, one solves equation \eqref{eq3} with 
$a=a(\delta)$ for $V_{\delta,a(\delta)}$
and takes its solution as an approximate solution to \eqref{eq1}.

Although the sequence $u_n$, defined by \eqref{eq22}, converges to the 
solution of equation \eqref{eq3} at the rate of
a geometrical series with a denominator $q\in(0,1)$, 
it is very time consuming to try to solve 
equation \eqref{eq3} with high accuracy if $q$ is close to 1. 
Theorem~\ref{thm2} (see below) allows one to 
stop iterations \eqref{eq22} at the first value of $n$ which satisfies the 
following condition:
\begin{equation}
\label{eq230}
\|F(u_n) + a u_n - f_\delta\| \le \theta \delta,\qquad \theta>0,
\end{equation}
where 
$\theta$ is a fixed constant.
This saves the time of computation.

\begin{thm}
\label{thm2}
Let $\delta, F, f_\delta$, and $y$ be as in Theorem~\ref{thm1} and $0<\gamma<1$. 
Assume that $v_\delta\in H$ and $\alpha(\delta)>0$ 
satisfy the following conditions:
\begin{equation}
\label{eqx23}
\|F(v_\delta)+\alpha(\delta) v_\delta - f_\delta\| \le \theta \delta,\qquad \theta>0,
\end{equation}
and
\begin{equation}
\label{eqx24}
C_1\delta^\gamma \le \|F(v_\delta) - f_\delta\| \le C_2 \delta^\gamma,
\qquad
0< C_1 < C_2.
\end{equation}
Then one has:
\begin{equation}
\label{eqx25}
\lim_{\delta\to 0}\|v_\delta - y\| = 0.
\end{equation}
\end{thm}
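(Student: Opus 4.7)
The plan is to treat $v_\delta$ as the \emph{exact} solution of a perturbed version of \eqref{eq3}: set $\tilde f_\delta := F(v_\delta)+\alpha(\delta)v_\delta$, so that by \eqref{eqx23} one has $\|\tilde f_\delta-f_\delta\|\le\theta\delta$ and therefore $\|\tilde f_\delta-f\|\le(1+\theta)\delta$. Then $v_\delta$ is an \emph{exact} $V_{\tilde\delta,\alpha(\delta)}$ for effective noise level $\tilde\delta\le(1+\theta)\delta$, and the contraction-type argument used to derive \eqref{eq13} yields
\[
\|v_\delta-V_{\alpha(\delta)}\|\le\frac{(1+\theta)\delta}{\alpha(\delta)},
\]
where $V_{a}$ is the noise-free solution of \eqref{eq3}. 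After this reduction, the argument structurally mirrors the proof of Theorem~\ref{thm1}: show $\alpha(\delta)\to 0$, then control the quotient $\delta/\alpha(\delta)$, then invoke Lemma~\ref{lem1}.

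First I would establish $\alpha(\delta)\to 0$. By the triangle inequality, $\alpha(\delta)\|v_\delta\|\le\|F(v_\delta)+\alpha(\delta)v_\delta-f_\delta\|+\|F(v_\delta)-f_\delta\|\le\theta\delta+C_2\delta^\gamma$, using \eqref{eqx23} and the upper estimate in \eqref{eqx24}. Multiplying the displayed comparison by $\alpha(\delta)$ and adding gives
\[
\alpha(\delta)\|V_{\alpha(\delta)}\|\le\alpha(\delta)\|v_\delta\|+(1+\theta)\delta\le C_2\delta^\gamma+(1+2\theta)\delta\longrightarrow 0.
\]
Because Lemma~\ref{lem2} applied with $\delta=0$ tells us $\phi_0(a):=a\|V_a\|$ is strictly increasing and continuous on $(0,\infty)$ with $\phi_0(0^+)=0$, the relation $\phi_0(\alpha(\delta))\to 0$ forces $\alpha(\delta)\to 0$, exactly as in the Theorem~\ref{thm1} argument.

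Next I would exploit the lower bound in \eqref{eqx24}: the reverse triangle inequality gives $\alpha(\delta)\|v_\delta\|\ge C_1\delta^\gamma-\theta\delta$, while estimate \eqref{eq14.1} together with the comparison above yields $\|v_\delta\|\le\|y\|+(1+\theta)\delta/\alpha(\delta)$, whence $\alpha(\delta)\|v_\delta\|\le\alpha(\delta)\|y\|+(1+\theta)\delta$. Combining,
\[
C_1\delta^\gamma-(1+2\theta)\delta\le\alpha(\delta)\|y\|,
\]
so for $\gamma<1$ the left side is positive and of order $\delta^\gamma$ for small $\delta$, giving $\delta/\alpha(\delta)=O(\delta^{1-\gamma})\to 0$. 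Finally writing $\|v_\delta-y\|\le\|v_\delta-V_{\alpha(\delta)}\|+\|V_{\alpha(\delta)}-y\|\le(1+\theta)\delta/\alpha(\delta)+\|V_{\alpha(\delta)}-y\|$ and applying Lemma~\ref{lem1} to the second term delivers \eqref{eqx25}.

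The main obstacle is really just bookkeeping: one has to propagate the extra $\theta\delta$ perturbation coming from the inexact solve \eqref{eqx23} through both the upper bound on $\alpha(\delta)\|V_{\alpha(\delta)}\|$ and the lower bound on $\alpha(\delta)$, and verify that it is absorbed into constants without disturbing the $\delta^\gamma$-versus-$\delta$ balance that is crucial at the end. The condition $\gamma<1$ is used in exactly the same place as in Theorem~\ref{thm1}, namely to ensure $C_1\delta^\gamma-(1+2\theta)\delta$ is positive for small $\delta$ and tends to zero strictly slower than $\delta$, so that $\delta/\alpha(\delta)\to 0$.
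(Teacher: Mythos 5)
Your proof is correct and follows essentially the same route as the paper's: the same monotonicity estimate \eqref{eqy28}/\eqref{eq13}, the same squeeze of $\alpha(\delta)\|v_\delta\|$ between $C_1\delta^\gamma-O(\delta)$ and $\alpha(\delta)\|y\|+O(\delta)$ via \eqref{eq14.1} and \eqref{2eq1}, the same appeal to Lemma~\ref{lem2} with $\delta=0$ to force $\alpha(\delta)\to0$, and the same final use of Lemma~\ref{lem1}. The only (harmless) difference is that by viewing $v_\delta$ as the exact regularized solution for the perturbed data $\tilde f_\delta:=F(v_\delta)+\alpha(\delta)v_\delta$ you compare $v_\delta$ to $V_{\alpha(\delta)}$ in a single step, whereas the paper routes the triangle inequality through the intermediate element $V_{\delta,\alpha(\delta)}$.
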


\begin{proof}
Let $u$ and $v$ be arbitrary elements in $H$. By the monotonicity of 
$F$ one gets
\begin{equation}
\begin{split}
a\|u-v\|^2 &\le \langle u-v, F(u)-F(v)+ au-av \rangle\\
&\le \|u-v\| \|F(u)-F(v)+ au-av\|,\qquad \forall a> 0.
\end{split}
\end{equation}
This implies
\begin{equation}
\label{eqy28}
a\|u-v\| \le \|F(u)-F(v)+ au-av\|,\qquad \forall v,u \in H,\quad \forall a>0.
\end{equation}
Using inequality \eqref{eqy28} with $v=v_\delta$ and $u=V_{\delta,\alpha(\delta)}$, 
equation \eqref{eq3} with $a=\alpha(\delta)$, and inequality \eqref{eqx23},
one gets
\begin{equation}
\begin{split}
\alpha(\delta) \|v_\delta - V_{\delta,\alpha(\delta)}\| &\le \|F(v_\delta)-F(V_{\delta,\alpha(\delta)})+\alpha(\delta) v_\delta - \alpha(\delta) V_{\delta,\alpha(\delta)}\|\\
&=\|F(v_\delta)+ \alpha(\delta) v_\delta -f_\delta\| \le \theta\delta.
\end{split}
\end{equation}
Therefore,
\begin{equation}
\label{eqx26}
\|v_\delta - V_{\delta,\alpha(\delta)}\| \le \frac{\theta \delta}{\alpha(\delta)}.
\end{equation}
Using \eqref{2eq1} and \eqref{eqx26}, one gets:
\begin{equation}
\label{eqy30}
\alpha(\delta) \|v_\delta\| \le \alpha(\delta) \|V_{\delta,\alpha(\delta)}\| + 
\alpha(\delta)\|v_\delta - V_{\delta,\alpha(\delta)}\| \le \theta \delta + 
\alpha(\delta) \|y\| + \delta.
\end{equation}
From the triangle inequality and inequalities \eqref{eqx23} and  
\eqref{eqx24} one obtains: 
\begin{equation}
\label{eqy31}
\alpha(\delta)\|v_\delta\| \ge \|F(v_\delta) - f_\delta\| - \|F(v_\delta)+ \alpha(\delta) v_\delta -f_\delta\|
\ge C_1\delta^\gamma - \theta \delta.
\end{equation}
Inequalities \eqref{eqy30} and \eqref{eqy31} imply
\begin{equation}
C_1\delta^\gamma - \theta \delta \le \theta \delta + 
\alpha(\delta) \|y\| + \delta.
\end{equation}
This inequality and the fact that $C_1-\delta^{1-\gamma} - 2\theta\delta^{1-\gamma}>0$ for sufficiently small $\delta$ and $0<\gamma<1$ imply
\begin{equation}
\frac{\delta}{\alpha(\delta)} \le \frac{\delta^{1-\gamma}\|y\|}
{C_1-\delta^{1-\gamma} - 2\theta\delta^{1-\gamma}}, \qquad  0<\delta\ll 1.
\end{equation}
Thus, one obtains 
\begin{equation}
\label{eqx27}
\lim_{\delta\to 0} \frac{\delta}{\alpha(\delta)} = 0.
\end{equation}
From the triangle inequality and inequalities \eqref{eqx23}, \eqref{eqx24} and \eqref{eqx26}, one gets
\begin{equation}
\begin{split}
\alpha(\delta)\|V_{\delta,\alpha(\delta)}\|
&\le \|F(v_\delta) - f_\delta\| + \|F(v_\delta)+\alpha(\delta)v_\delta -f_\delta\| + 
\alpha(\delta)\|v_\delta - V_{\delta,\alpha(\delta)}\|\\
&\le C_2 \delta^\gamma + \theta \delta + \theta \delta.
\end{split}
\end{equation}
This inequality implies
\begin{equation}
\label{eqxx34}
\lim_{\delta\to 0}\alpha(\delta)\|V_{\delta,\alpha(\delta)}\| = 0.
\end{equation}
The triangle inequality and inequality \eqref{eq13} imply
\begin{equation}
\label{chet}
\begin{split}
\alpha\|V_{\alpha}\| &\le \alpha \big{(}\|V_{\delta,\alpha} - V_{\alpha}\| + \|V_{\delta,\alpha}\|\big{)}\\
&\le \delta + \alpha\|V_{\delta,\alpha}\|.
\end{split}
\end{equation}
From formulas \eqref{chet} and \eqref{eqxx34}, one gets
\begin{equation}
\label{tichet}
\lim_{\delta\to 0}\alpha(\delta)\|V_{\alpha(\delta)}\| = 0.
\end{equation}
It follows from Lemma~\ref{lem2} with $f_\delta= f$, i.e., $\delta=0$, that 
 the function $\phi_0(a):=a\|V_{a}\|$ is nonnegative and strictly increasing on $(0,\infty)$.
 This and relation \eqref{tichet} imply 
 \begin{equation}
\label{eqx28}
\lim_{\delta\to 0} \alpha(\delta)= 0.
\end{equation}
From the triangle inequality and inequalities \eqref{eqx26} 
and \eqref{eq13} one obtains
\begin{equation}
\label{eqx29}
\begin{split}
\|v_\delta - y\| &\le \|v_\delta - V_{\delta,\alpha(\delta)}\| + 
\|V_{\delta,\alpha(\delta)} - V_{\alpha(\delta)}\| + \|V_{\alpha(\delta)} 
- y\|\\
&\le \frac{\theta \delta}{\alpha(\delta)} + \frac{\delta}{\alpha(\delta)} +  
\|V_{\alpha(\delta)} - y\|,
\end{split}
\end{equation}
where $V_{\alpha(\delta)}$ solves equation (3) with $a=\alpha(\delta)$
and $f_\delta=f$.

The conclusion \eqref{eqx25} follows from inequalities 
\eqref{eqx27}, \eqref{eqx28}, \eqref{eqx29} and Lemma~\ref{lem1}.
Theorem~\ref{thm2} is proved. 
\end{proof}

\begin{rem}{\rm 
Inequalities \eqref{eqx23} and \eqref{eqx24} are used as stopping rules for 
finding approximations:  
\begin{equation*}
\alpha(\delta)\approx a(\delta),\quad \text{and} \quad v(\delta)\approx V_{\delta,a(\delta)}.
\end{equation*}
}
\end{rem}

\begin{rem}{\rm
By the 
monotonicity of $F$ one gets
\begin{equation*}
\begin{split}
\|F(u)-F(v)\|^2 &\le \langle F(u)-F(v), F(u)-F(v)+ a(u-v)\rangle \\
&\le \|F(u)-F(v)\|\|F(u)-F(v)+ a(u-v)\|,\quad \forall u,v\in H,\quad \forall a>0.
\end{split}
\end{equation*}
This implies
\begin{equation}
\label{eqxx36}
\|F(u)-F(v)\| \le \|F(u)-F(v)+ a(u-v)\|,\qquad \forall u,v\in H,\quad a>0.
\end{equation}
}
\end{rem}

Fix $\delta>0$ and $\theta>0$. Let $C$ be as in Theorem~\ref{thm1}. Choose $C_1$ and $C_2$ such that 
\begin{equation}
\label{neq54}
C_1\delta^\gamma + \theta\delta< C\delta^\gamma< C_2\delta^\gamma - \theta\delta.
\end{equation}
Suppose $\alpha_i$ and $v_i$, $i=1,2,$ satisfy condition 
\eqref{eqx23} and 
\begin{equation}
\label{neq55}
\|F(v_1) - f_\delta\| < C_1\delta^\gamma,\qquad C_2\delta^\gamma< \|F(v_2) - f_\delta\|.
\end{equation}
Let us show that
\begin{equation}
\label{neq56}
\alpha_{low}:=\alpha_1 < a(\delta) < \alpha_2:=\alpha_{up},
\end{equation} 
where $a(\delta)$ satisfies conditions of Theorem~\ref{thm1}.
Using inequality \eqref{eqxx36} for $v_i$ and $V_{\delta,\alpha_i}$, $i=1,2$, 
and inequality \eqref{eqx23}, one gets
\begin{equation}
\label{neq57}
\begin{split}
\|F(v_i)-F(V_{\delta,\alpha_i})\| &\le \|F(v_i)-F(V_{\delta,\alpha_i}) + \alpha_i v_i
- \alpha_i V_{\delta,\alpha_i}\|\\
&\le \|F(v_i) + \alpha_i v_i
- f_\delta\| \le \theta\delta.
\end{split}
\end{equation}
From inequalities \eqref{neq55}, \eqref{neq57} and the triangle inequality, one derives:
\begin{equation}
\label{neq58}
\|F(V_{\delta,\alpha_1})-f_\delta\| < C_1\delta^\gamma + \theta\delta\quad \text{and}\quad 
 C_2\delta^\gamma - \theta\delta <\|F(V_{\delta,\alpha_2})-f_\delta\|.
\end{equation}
Recall that $\|F(V_{\delta,a(\delta)})-f_\delta\| = C\delta^\gamma$. 
Inequality \eqref{neq56} is obtained from inequalities \eqref{neq54}, \eqref{neq58} and the fact that the function $\phi(\alpha)=\|F(V_{\delta,\alpha})-f_\delta\|$ is strictly increasing (see Lemma~\ref{lem2}).

Let $f_\delta,F,C,\theta,\gamma$, and $\delta$ be as in Theorem~\ref{thm1} and \ref{thm2},
and $C_1$ and $C_2$ satisfy inequality \eqref{neq54}. 
Let us formulate an algorithm (see {\bf Algorithm 1} below) 
for finding $\alpha(\delta)\approx a(\delta)$ and 
$v(\delta)\approx V_{\delta,a(\delta)}$, using the bisection method and
assuming that $F$ is a locally Lipschitz monotone operator 
and $\alpha_{low}$ and $\alpha_{up}$ are known. 
By Theorem~\ref{thm2}, $v(\delta)$ can be considered as a stable solution 
to equation \eqref{eq1}.

{\bf Algorithm 1}: 
{\it Finding $\alpha(\delta)\approx a(\delta)$ and $v_\delta\approx V_{\delta,a(\delta)}$ given $\alpha_{low}$ and $\alpha_{up}$.}
\begin{enumerate}

\item 
Let $a:=\frac{\alpha_{up}+\alpha_{low}}{2}$ and 
$u_0$ be an initial guess for $V_{\delta,a}$.
Compute $u_n$ by formula \eqref{eq22} and stop at 
$n_{stop}$, where $n_{stop}$ is the smallest $n>0$
 for which condition \eqref{eqx23} is satisfied. Then go to step 2.
 
\item
If $C_2\delta^\gamma < \|F(u_{n_{stop}})-f_\delta\|$,
then set  $\alpha_{up}:=a$ and go to step 4.
Otherwise, go to step 3.

\item
If $C_1\delta^\gamma\le\|F(u_{n_{stop}})-f_\delta\|$, then stop the process 
and take $v(\delta):=u_{n_{stop}}$ as a solution to \eqref{eq1}.
If $\|F(u_{n_{stop}})-f_\delta\|<C_1\delta^\gamma$,
then set  $\alpha_{low}:=a$ and go to step 4.

\item Check if $\|a-\alpha_{low}\|$ is less than a desirable small 
value $\epsilon>0$. If it is,  then
take $v(\delta):=u_{n_{stop}}$ as a solution to \eqref{eq1}. 
If is is not, then go back to step 1.
\end{enumerate}

Let us formulate algorithms for finding $\alpha_{up}$ and $\alpha_{low}$.

{\bf Algorithm 2}: {\it Finding $\alpha_{up}$.}
\begin{enumerate}
\item Let $a=\alpha$ be an initial guess for $\alpha(\delta)$ and $u_0$ be an initial guess for $v_\delta$. 
Compute $u_n$ by formula \eqref{eq22} with $a$ and stop at 
$n_{stop}$, the smallest $n>0$
 for which condition \eqref{eqx23} is satisfied. Then go to step 2.
 
\item If condition \eqref{eqx24} holds for $v_\delta:= u_{n_{stop}}$, then stop 
the process and take $u_{n_{stop}}$ as a solution to \eqref{eq1}.
Otherwise, go to step 3.

\item 
If $C_2\delta^\gamma < \|F(u_{n_{stop}})-f_\delta\|$,
then set  $\alpha_{up}:=a$. Otherwise, set $\alpha:=2a$ and go back step 1.
\end{enumerate}

{\bf Algorithm 3}: {\it Finding $\alpha_{low}$.}
\begin{enumerate}
\item Let $a=\alpha$ be an initial guess for $\alpha(\delta)$ and $u_0$ be an initial guess for $v_\delta$. 
Compute $u_n$ by formula \eqref{eq22} with $a$ and stop at 
$n_{stop}$, the smallest $n>0$
 for which condition \eqref{eqx23} is satisfied. Then go to step 2.
 
\item If condition \eqref{eqx24} holds for $v_\delta:= u_{n_{stop}}$, then stop 
the process and take $u_{n_{stop}}$ as a solution to \eqref{eq1}.
Otherwise, go to step 3.

\item 
If $\|F(u_{n_{stop}})-f_\delta\| < C_1\delta^\gamma$,
then set  $\alpha_{low}:=a$. Otherwise, set $\alpha:=\frac{a}{2}$ and go back step 1.
\end{enumerate}

In practice these algorithms are often implemented at the same time to avoid repetition calculations.

\begin{rem}{\rm
The sequence $(\|u_n - V_{\delta,a(\delta)}\|)_{n=0}^\infty$, where
$u_n$ is computed by formula \eqref{eq22} and $V_{\delta,a(\delta)}$
is the solution to \eqref{eq3} with $a=a(\delta)$, is decreasing.
Thus, the sequence $u_n$ will stay
inside a ball $B(0,R)$ assuming that $R>0$ is chosen sufficiently large, so that $y,u_0\in B(0,R)$.
}
\end{rem}

\begin{rem}{\rm
Theorem~\ref{thm2} and the above algorithms are not only useful for solving 
nonlinear equations with monotone operators
but also for solving linear equations with monotone operators. 
If one uses iterative methods to solve equation \eqref{eq21} then, by using
Theorem~\ref{thm2}, one can stop iterations whenever inequality \eqref{eqx23} holds.
 By using stopping rule \eqref{eqx23} one saves time of computations compared to solving \eqref{eq21} exactly. 
If $F$ is a positive matrix then one can solve \eqref{eq21} by 
conjugate gradient, or Jacobi, or Gauss-Seidel, or successive over-relaxation 
methods, with stopping rule \eqref{eqx23}.
}
\end{rem}

\begin{rem}{\rm
If $F$ is twice Fr\'{e}chet 
differentiable, 
there are more options for solving 
equations \eqref{eq3} and \eqref{eq23}:
they can be solved by 
gradient-type methods, Newton-type methods, or a combination of these 
methods.
}
\end{rem}

\end{document}